\newtheorem{theorem}{Theorem}[section]
\newtheorem{corollary}[theorem]{Corollary}
\theoremstyle{definition}
\newtheorem{proposition}[theorem]{Proposition}
\theoremstyle{remark}
\newtheorem{remark}[theorem]{Remark}
\newcommand{\longcomment}[1]{}
\DeclareMathOperator{\Spec}{Spec}
\subjclass[2010]{14J28, 14N25}
\keywords{smooth quartics, conics on quartics, K3 surfaces}
\numberwithin{equation}{section}
\author{Bartosz Naskręcki}
\address{Faculty of Mathematics and Computer Science\\
	Adam Mickiewicz University in Poznań\\
	ul. Uniwersytetu Poznańskiego 4 \\
	61-614, Poznań\\ 
	Poland}
\email{bartosz.naskrecki@amu.edu.pl}
\title[Explicit equations of 800 conics on a Barth-Bauer quartic]{Explicit equations of 800 conics on a Barth-Bauer quartic}
\renewcommand{\tocsection}[3]{%
	\indentlabel{\@ifnotempty{#2}{\bfseries\ignorespaces#1 #2\quad}}\bfseries#3}
\renewcommand{\tocsubsection}[3]{%
	\indentlabel{\@ifnotempty{#2}{\ignorespaces#1 #2\quad}}#3}
\newcommand\@dotsep{4.5}
\def\@tocline#1#2#3#4#5#6#7{\relax
	\ifnum #1>\c@tocdepth 
	\else
	\par \addpenalty\@secpenalty\addvspace{#2}%
	\begingroup \hyphenpenalty\@M
	\@ifempty{#4}{%
		\@tempdima\csname r@tocindent\number#1\endcsname\relax
	}{%
		\@tempdima#4\relax
	}%
	\parindent\z@ \leftskip#3\relax \advance\leftskip\@tempdima\relax
	\rightskip\@pnumwidth plus1em \parfillskip-\@pnumwidth
	#5\leavevmode\hskip-\@tempdima{#6}\nobreak
	\leaders\hbox{$\m@th\mkern \@dotsep mu\hbox{.}\mkern \@dotsep mu$}\hfill
	\nobreak
	\hbox to\@pnumwidth{\@tocpagenum{\ifnum#1=1\bfseries\fi#7}}\par
	\nobreak
	\endgroup
	\fi}
\renewcommand\csname r@tocindent0\endcsname{0pt}
\def\l@subsection{\@tocline{2}{0pt}{2.5pc}{5pc}{}}
\begin{document}
	
	\begin{abstract}
		We present equations of $800$ conics on a Mukai's smooth quartic which is an explicit  model of a K3 surface recently considered by Alex Degtyarev in \cite{Alex1,Alex2}. The number $800$ is a current record for the number of irreducible conics on a smooth quartic in ordinary projective space.
	\end{abstract}

	\maketitle

\section{Introduction}
In a recent paper Alex Degtyarev\cite{Alex1} has proved the existence of a smooth quartic surface  $X\subset\mathbb{P}^{3}_{\mathbb{C}}$ which contains $800$ irreducible conics. In the sequel \cite{Alex2} Degtyarev proved that up to projective linear change of coordinates there is exactly one smooth quartic in $\mathbb{P}^3$ which contains $16$  pairwise disjoint conics (it is called a \textit{Barth-Bauer} quartic) which admits 800 conics. The number $800$ is the maximum of the total number of conics on a Barth-Bauer quartic. It is expected that $800$ is the maximum number of conics on \textit{any} smooth quartic in $\mathbb{P}^3$, \cite{Alex2}. A surface described by Degtyarev has N\'{e}ron-Severi group of discriminant $-160$ and transcendental lattice $\langle 4\rangle\oplus \langle 40\rangle$. Xavier Roulleau has pointed to a concrete model of such a surface which was studied previously by Bonnaf\'e and Sarti and constructed by Mukai. The same surface was also studied by Brandhorst and Hashimoto \cite{BH}. Its equation in the projective space $\mathbb{P}^{3}_{z_0,z_1,z_2,z_3}$ is
\[X:\sum_{i=0}^{3} z_i^4-6 \sum_{i<j} z_i^2 z_j^2=0.\]
This quartic is smooth over $\mathbb{C}$ and admits an action of a group $P_{G_{Mu}}$, which is an extension of the Mathieu group $M_{20}$ by $\mathbb{Z}/2$, and is a projectivization of a complex reflection group $G_{29}$ in Shephard- Todd classification \cite{ST}. The group $G_{29}$ is generated by four matrices $s_1,s_2,s_3,s_4$ ($i=\sqrt{-1}$)

$$
\begin{array}{cc}
	s_1=\begin{pmatrix}
		1  &0 & 0 & 0\\
		0  &1 & 0 & 0\\
		0 & 0 & 1 & 0\\
		0 & 0 & 0 &-1\\
	\end{pmatrix}, &
	s_2={\frac{1}{2}}\begin{pmatrix}
		1&  1&  i&  i\\
		1 & 1& -i& -i\\
		-i&  i&  1& -1\\
		-i  &i &-1&  1\\
	\end{pmatrix},\\
	&\\
	s_3=\begin{pmatrix}
		0 &1& 0& 0\\
		1 &0 &0 &0\\
		0 &0 &1& 0\\
		0& 0 &0& 1\\
	\end{pmatrix},&
	s_4=\begin{pmatrix}
		1 &0& 0& 0\\
		0 &0 &1 &0\\
		0& 1 &0 &0\\
		0 &0 &0 &1\\
	\end{pmatrix}. \\
\end{array}
$$

In \cite{BS} Bonnaf\'e and Sarti  established the equations of 320 conics lying on that surface, located on two orbits upon action of the automorphism group $P_{G_{Mu}}$.

We find the remaining equations of irreducible conics on the surface $X$ and explain the heuristic argument behind this discovery. We also present a set of conics which constitute a basis of the N\'{e}ron-Severi group and a collection of $16$ pairwise disjoint conics. 

Moreover, in Section \ref{sec:grobner_more} we prove, independently from the work 
\cite{Alex1, Alex2},  that on the surface X there is exactly 800 conics. Our proof uses only elaborate Gr\"{o}bner bases calculations, avoiding the group structure on X and the properties of the N\'{e}ron-Severi group.

Let $\mathcal{C}$ denote the set of irreducible conics on $X$. Let $C_1$, $C_2$ be the two conics found in \cite{BS}.

\begin{align*}
	C_1 &: z_0+z_1+z_2=0,\quad z_1^2 + z_1z_2 + z_2^2 +\frac{3+\sqrt{10}}{2} z_3^2=0,\\
	C_2 &:  z_0+z_1+z_2=0,\quad z_1^2 + z_1z_2 + z_2^2 +\frac{3-\sqrt{10}}{2} z_3^2=0.
\end{align*}

Let $K=\mathbb{Q}(i=\sqrt{-1},\sqrt{2},\sqrt{5})$ and let $A=\frac{(\sqrt{5}+1)i}{2}$. Let $C_3$ be a conic

\[C_3: z_2+A z_3=0,\quad z_0^2+2 \sqrt{2} z_1 z_0+z_1^2+\frac{3 (\sqrt{5}+1)}{2}  z_3^2=0.\]

The way in which the equation of the conic $C_3$ was found is explained in Section \ref{sec:heuristic}.
\begin{theorem}\label{prop:conics}
The set $\{C_1,C_2,C_3\}$ generates the set $\mathcal{C}$ of $800$ irreducible conics on $X$ under the action of the group $P_{G_{Mu}}$. The corresponding orbits have length $160,160$ and $480$.
\end{theorem}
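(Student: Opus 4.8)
The plan is to verify the theorem by explicit orbit computation, treating the group $P_{G_{Mu}}$ as a concrete finite matrix group acting on $\mathbb{P}^3$. First I would fix a computer algebra setup: work over the number field $K = \mathbb{Q}(i,\sqrt{2},\sqrt{5})$ (extended if necessary to contain the coefficients $\tfrac{3\pm\sqrt{10}}{2}$ appearing in $C_1,C_2$, so really over $\mathbb{Q}(i,\sqrt{2},\sqrt{5},\sqrt{10})=\mathbb{Q}(i,\sqrt{2},\sqrt{5})$ since $\sqrt{10}=\sqrt{2}\sqrt{5}$), generate the group $P_{G_{Mu}}$ from the images of $s_1,s_2,s_3,s_4$ together with the extra order-two element realizing the extension of $M_{20}$ by $\mathbb{Z}/2$, and confirm $|P_{G_{Mu}}| = 2\cdot|M_{20}| = 2\cdot 960 = 1920$. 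A conic $C\subset X$ is cut out by a linear form $\ell$ and a quadric $q$ modulo $\ell$; the group acts on such pairs by substituting the linear change of coordinates, and I would implement this action on the data $(\ell,q)$ normalized appropriately (e.g.\ scaling $\ell$ to be monic in its first nonzero variable, and reducing $q$ modulo $\ell$ to a canonical representative).

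Second, for each of $C_1$, $C_2$, $C_3$ I would compute the full orbit under $P_{G_{Mu}}$ by a breadth-first search: repeatedly apply the four generators (and the extra involution), normalize, and collect distinct conics until the orbit stabilizes. This yields the orbit lengths, which I expect to come out $160$, $160$, $480$; equivalently the stabilizers have orders $1920/160 = 12$ and $1920/480 = 4$. Along the way I would check that $C_1,C_2,C_3$ are indeed irreducible conics on $X$ — irreducibility because the quadric $q$ is not a perfect square modulo $\ell$ (so the plane section $X\cap\{\ell=0\}$ is not a union of two lines on that conic's support), and membership on $X$ by reducing the quartic form defining $X$ modulo $(\ell,q)$ and checking it vanishes. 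Since the group preserves both irreducibility and the property of lying on $X$, every conic in the three orbits is then automatically an irreducible conic on $X$.

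Third, I would verify the three orbits are pairwise disjoint (so the union has exactly $160+160+480 = 800$ conics) — this is immediate once the normalized representatives are tabulated and compared — and then establish that these $800$ exhaust $\mathcal{C}$. For this last point I would invoke the result of Degtyarev \cite{Alex1, Alex2}, already cited in the introduction, that $X$ carries exactly $800$ irreducible conics; since we have exhibited $800$ distinct ones, the set $\mathcal{C}$ is precisely this union. (Alternatively, one can appeal forward to the independent Gröbner-basis count carried out in Section \ref{sec:grobner_more}, which shows $|\mathcal{C}| = 800$ without using the lattice-theoretic input.) Either way the count $800$ is what pins down that no conic has been missed.

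The main obstacle is arithmetic bookkeeping rather than conceptual: the matrices $s_2$ and the extra involution introduce denominators and the full field $\mathbb{Q}(i,\sqrt{2},\sqrt{5})$, so the coefficients of the quadrics $q$ after repeated substitution become unwieldy, and one must choose a genuinely canonical normal form for the pair $(\ell, q)$ so that two group-translates of the same conic are recognized as equal despite being presented differently (different scalings of $\ell$, different representatives of $q \bmod \ell$). Getting this normalization right — and carrying enough field arithmetic precision to avoid spurious coincidences or spurious distinctions — is the delicate part; once it is in place the orbit enumeration is a finite, mechanical computation whose output is the assertion of the theorem.
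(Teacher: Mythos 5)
Your proposal is correct and takes essentially the same approach as the paper: the paper's written argument computes the stabilizers of $C_1$, $C_2$, $C_3$ (orders $12$, $12$, $4$) and applies the orbit--stabilizer theorem to get the lengths $160$, $160$, $480$, while its appendix code performs exactly the explicit orbit enumeration over $\mathbb{Q}(i,\sqrt{2},\sqrt{5})$ that you describe, and exhaustiveness likewise rests on the external count $|\mathcal{C}|=800$ (Degtyarev, or the independent Gr\"obner computation of Section~\ref{sec:grobner_more}). One minor correction: $P_{G_{Mu}}$ is simply the projectivization of $\langle s_1,s_2,s_3,s_4\rangle$, of order $7680/4=1920$, so no extra order-two generator needs to be adjoined.
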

\begin{proof}
A conic $C_1$ is stabilized by the group $\textrm{Stab}_{P_{G_{Mu}}}(C_1)\subset P_{G_{Mu}}$ which is generated by $(z_0:z_1:z_2:z_3)\mapsto (z_2:z_0:z_1:-z_3)$ and $(z_0:z_1:z_2:z_3)\mapsto (z_1:z_0:z_2:-z_3)$. Group $\textrm{Stab}_{P_{G_{Mu}}}(C_1)$ is of order $12$. Hence by the orbit-stabilizer theorem the length of the orbit $\Omega_1$ of $C_1$ under the action of $P_{G_{Mu}}$ equals $1920/12=160$.

By a direct inspection we find that $C_2$ does not belong to the orbit of $C_1$. A stabilizer group $\textrm{Stab}_{P_{G_{Mu}}}(C_2)$ of $C_2$ equals $\textrm{Stab}_{P_{G_{Mu}}}(C_1)$. Thus the orbit $\Omega_2$ of $C_2$ has length $160$.

Finally, the curve $C_3$ is stabilized by the subgroup $\textrm{Stab}_{P_{G_{Mu}}}(C_3)$ of order $4$ and generated by $(z_0:z_1:z_2:z_3)\mapsto (i z_0:i z_1:-i z_2:-i z_3)$, $(z_0:z_1:z_2:z_3)\mapsto (z_1:z_0:z_2:z_3)$. So the orbit $\Omega_3$ of $C_3$ has length $480$, proving that its disjoint from $\Omega_1$ and $\Omega_2$. Hence the union of three orbits described above has $800$ elements.
\end{proof}

\section{A heuristic behind Theorem \ref{prop:conics}}\label{sec:heuristic}

One can notice by direct inspection that all conics in the orbits $\Omega_1$ and $\Omega_2$ have the cutting hyperplane equation which has three or four non-zero coefficients. It is then natural to check whether there are any conics on $X$ which are generated by the hyperplanes with less non-zero coefficients. Without loss of generality (due to symmetry of the coordinates) we have essentially one cutting hyperplane with one non-zero coefficient $H:z_3=0$. It is easy to check that the curve $H\cap X$ is a smooth quartic curve, hence by the Pl\"{u}cker formula it has geometric genus $3$.

What remains to be checked is the reducibility of the quartics generated by the cuts $H_{\alpha}\cap X$ where the hyperplane has the form $H_{\alpha}: z_2+\alpha z_3=0$.  

To numerically confirm our expectation that some conics in the complement of $\Omega_1\cup\Omega_2$ belonged to the set $H_{\alpha}\cap X$ for suitable choices of complex numbers $\alpha$ we have studied this problem for the reduction of $X$ to the finite field $\mathbb{F}_{p^2}$ for certain primes $p$ (for example $p=41$ is the least split prime in $K$). It turned out that for several primes there was a pair of conics generated by the plane $z_2+\alpha z_3=0$ for a suitable choice of $\alpha\in\mathbb{F}_{p^2}$. Lead by this hunch we considered the following setup.

Let $C_\alpha: \sum_{i=0}^{3} z_i^4-6 \sum_{i<j} z_i^2 z_j^2=0, z_2+\alpha z_3=0$ be a family of curves parametrized by $\alpha\in\mathbb{C}$.

\begin{proposition}\label{prop:sing_quartics}
Over the field of rational functions $\mathbb{C}(t)$ the curve $C_t$ is geometrically irreducible, smooth and of genus $3$.
\end{proposition}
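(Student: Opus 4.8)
The plan is to substitute $z_2 = -t z_3$ into the quartic $X$ and analyze the resulting plane curve in $\mathbb{P}^2_{z_0:z_1:z_3}$ over the function field $\mathbb{C}(t)$. Setting $z_2 = -tz_3$, the defining equation $\sum z_i^4 - 6\sum_{i<j} z_i^2 z_j^2 = 0$ becomes a quartic form $F_t(z_0,z_1,z_3)$ whose coefficients are polynomials in $t$; one writes this out once and for all. First I would check that $F_t$ is not identically a product of lower-degree forms over $\mathbb{C}(t)$: since $\mathbb{C}(t)$ is not algebraically closed this is weaker than geometric irreducibility, so the real task is to show $F_t$ is irreducible over $\overline{\mathbb{C}(t)}$. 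The cleanest route is to prove smoothness of the curve $C_t \subset \mathbb{P}^2_{\overline{\mathbb{C}(t)}}$ directly: a smooth plane curve is automatically geometrically irreducible, and by the genus--degree (Plücker) formula a smooth plane quartic has genus $\binom{4-1}{2} = 3$, so all three assertions follow at once.

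Thus the heart of the argument is a smoothness check for $C_t$ over $\overline{\mathbb{C}(t)}$. Concretely, I would compute the three partial derivatives $\partial F_t/\partial z_0$, $\partial F_t/\partial z_1$, $\partial F_t/\partial z_3$ and show that the ideal they generate, together with $F_t$, has no common zero in $\mathbb{P}^2$ over $\overline{\mathbb{C}(t)}$ — equivalently, that the discriminant $\disc(F_t) \in \mathbb{C}[t]$ (the resultant-based invariant cutting out singular fibers) is a nonzero element of $\mathbb{C}[t]$. Since singular points of $C_t$ can only degenerate at finitely many special values of $t$, it suffices to exhibit a single value $t_0 \in \mathbb{C}$ (for instance $t_0 = 0$, giving the plane section $z_3 = 0$, or rather $z_2 = 0$ after relabeling) for which $F_{t_0}$ is a smooth plane quartic — the excerpt already observes that $H \cap X$ with $H: z_3 = 0$ is a smooth quartic curve, and the same type of computation applies to generic $\alpha$. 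From the existence of one smooth member one concludes $\disc(F_t) \not\equiv 0$, hence $\disc(F_t) \neq 0$ as an element of $\mathbb{C}(t)$, hence $C_t$ is smooth over $\overline{\mathbb{C}(t)}$.

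Alternatively, and perhaps more robustly for a referee, I would run the argument as a generic-smoothness statement: the incidence variety $\mathcal{X} = \{(P, \alpha) : P \in C_\alpha\} \subset \mathbb{P}^3 \times \mathbb{A}^1$ fibered over the $\alpha$-line has smooth generic fiber precisely because $X$ itself is smooth and the hyperplanes $H_\alpha$ move in a pencil whose base locus $\{z_2 = z_3 = 0\}$ meets $X$ transversally (one checks $X \cap \{z_2 = z_3 = 0\}$ is a smooth plane conic in $\mathbb{P}^1_{z_0:z_1}$... in fact four points, counted with the right multiplicity, avoiding the singular locus of any fiber by a direct Jacobian check). Generic smoothness of the total family then gives smoothness of $C_t$ over $\mathbb{C}(t)$, and irreducibility plus genus $3$ follow as before.

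The main obstacle I anticipate is purely computational: the quartic $F_t$ has moderately large coefficients in $t$, and computing $\disc(F_t)$ honestly — it will be a polynomial in $t$ of degree in the twenties — is the kind of step best delegated to a computer algebra system. The conceptual content is light; the risk is an arithmetic slip in identifying the special values of $t$ (which is not actually needed for the proposition, only the nonvanishing of the discriminant matters), so I would keep the write-up to: (i) substitute; (ii) exhibit one smooth fiber via an explicit Jacobian nonvanishing; (iii) invoke that $\disc(F_t) \in \mathbb{C}[t]$ is therefore nonzero; (iv) conclude smoothness, hence geometric irreducibility, hence genus $3$ by Plücker.
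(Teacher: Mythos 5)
Your proposal follows essentially the same route as the paper: eliminate $z_2$ to obtain a plane quartic over $\mathbb{C}(t)$, verify smoothness of the generic fibre (the paper calls this a ``routine calculation,'' while you make it explicit via non-vanishing of the discriminant, checked by specializing to the smooth fibre at $\alpha=0$), and then deduce geometric irreducibility and genus $3$ from smoothness and the degree--genus formula. This is correct and matches the paper's argument.
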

\begin{proof}
Elimination of the variable $z_2$ leads to a planar model in $\mathbb{P}^2$ over $\mathbb{C}(t)$ in variables $z_0,z_1,z_3$. It is a routine calculation to verify that the curve $C_t$ over $\mathbb{C}(t)$ is a smooth quartic (hence of genus $3$).
\end{proof}
\begin{remark}
Hence, due to Proposition \ref{prop:sing_quartics} there is a finite subset of numbers $\alpha\in\mathbb{C}$ (neccesarily algebraic) for which the curves $C_{\alpha}$ are not smooth. As will turn out in the sequel there is 12 such singular quartics, out of which only 4 are geometrically irreducible. That will produce $16$ new conics which were not previously found in \cite{BS}.
\end{remark}

Let $\pi:\mathcal{C}\rightarrow\mathbb{A}^{1}$ be a flat morphism whose fibre $\pi^{-1}(\alpha)$ above $\alpha$ is a curve $C_{\alpha}$. Let $\mathcal{S}\subset \mathbb{A}^{1}$ be a finite scheme whose closed points correspond to the fibres for which the curve is not smooth.
\begin{proposition}\label{prop:degeneration}
The scheme $\mathcal{S}$ equals $\Spec\mathbb{C}[x]/(g(x))$ where the polynomial $g(x)$ is
\[g(x)=\underbrace{\left(x^2-2 x-1\right) \left(x^2+2 x-1\right)}_{g_1(x)} \underbrace{\left(x^4+3 x^2+1\right) \left(5 x^4+6 x^2+5\right)}_{g_2(x)}.\]
\end{proposition}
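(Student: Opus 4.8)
The plan is to compute the discriminant of the plane quartic $C_t$ (after eliminating $z_2$) as a polynomial in $t$, and then identify its radical. Concretely, I would first carry out the elimination already invoked in Proposition \ref{prop:sing_quartics}: substituting $z_2 = -t z_3$ into $\sum_{i=0}^{3} z_i^4 - 6\sum_{i<j} z_i^2 z_j^2 = 0$ gives an explicit quartic form $F_t(z_0, z_1, z_3) \in \mathbb{C}(t)[z_0, z_1, z_3]$. The locus of $\alpha$ for which $C_\alpha$ is singular is exactly the vanishing of $\disc(F_t)$, the discriminant of a ternary quartic, which is a polynomial in the coefficients of $F_t$ and hence a polynomial in $t$ (of degree at most $27$, by the general formula for the discriminant of a plane curve of degree $d=4$, which has degree $3(d-1)^2 = 27$ in the coefficients, here specialized). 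I would compute this discriminant symbolically — e.g.\ via a Gr\"obner-basis elimination of $z_0, z_1, z_3$ from the ideal $(F_t, \partial_{z_0} F_t, \partial_{z_1} F_t, \partial_{z_3} F_t)$ together with the hyperplane at infinity checks, or directly through a resultant computation $\operatorname{Res}_{z_0}(\operatorname{Res}_{z_1}(F_t, \partial_{z_1}F_t), \dots)$ — and then factor the resulting univariate polynomial over $\mathbb{Q}$.

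The expected output is that $\disc(F_t)$ equals a nonzero constant times a power-product of the irreducible factors appearing in $g(x)$, namely $(x^2-2x-1)$, $(x^2+2x-1)$, $(x^4+3x^2+1)$, and $(5x^4+6x^2+5)$, possibly each raised to some multiplicity. Since the scheme $\mathcal{S}$ is defined as the reduced locus of non-smooth fibres, I would take $g(x)$ to be the radical, i.e.\ the squarefree part, which collapses any such multiplicities and yields exactly the stated polynomial. One should double-check that these four factors are pairwise coprime and squarefree (they have no common roots — this is immediate by comparing the quadratic factors' roots $1\pm\sqrt2$, $-1\pm\sqrt2$ with the roots of $g_2$, which lie on the unit circle since $g_2$ is palindromic — so $g$ is already reduced), and that no spurious factor of $t$ or other linear factor has been dropped; a quick sanity check at, say, $t=0$ (the hyperplane $z_2 = 0$, known to give a smooth quartic by the discussion preceding Proposition \ref{prop:sing_quartics}) confirms $g(0) = 1 \neq 0$ is consistent with smoothness, and one can verify at a root of $g_1$, e.g.\ $t = 1+\sqrt2$, that $C_t$ is genuinely singular.

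The main obstacle is purely computational: the discriminant of a ternary quartic is a large polynomial (degree $27$ in $15$ coefficients), and even after specializing the coefficients to explicit polynomials in $t$ the intermediate expressions in a naive resultant computation can blow up. I would mitigate this by working over $\mathbb{Q}$ rather than $\mathbb{C}$, by exploiting the symmetry $z_0 \leftrightarrow z_1$ of $F_t$ (which lets one pass to symmetric functions $u = z_0 + z_1$, $v = z_0 z_1$ and reduces the elimination to two variables), and if necessary by first computing $\disc(F_p \bmod t\text{-values})$ at many numerical or modular specializations of $t$, interpolating, and then certifying the closed form. Once the factored discriminant is in hand, reading off $\mathcal{S} = \Spec \mathbb{C}[x]/(g(x))$ with $g$ its squarefree part is immediate, and the count of $12$ singular fibres ($4 + 4 + 2 + 2$ roots) and the identification of which are geometrically irreducible is then a finite check fibre-by-fibre, deferred to the sequel as the remark indicates.
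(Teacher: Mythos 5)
Your proposal is correct and is in substance the same computation as the paper's: both pass to the planar model $F(z_0,z_1,z_3)$ obtained by eliminating $z_2$, and both then eliminate the geometric variables from the Jacobian ideal $\langle F,\partial_{z_0}F,\partial_{z_1}F,\partial_{z_3}F\rangle$ over $\mathbb{Q}$. The difference is one of packaging. You compute the eliminant directly (the discriminant of the ternary quartic as a polynomial in $t$, then its squarefree part), whereas the paper computes the irreducible components of the full scheme $\Spec\mathbb{Z}[z_0,z_1,z_3,\alpha]/I$ and reads off the minimal polynomial of $\alpha$ from each component, discarding by hand the irrelevant component $z_0=z_1=z_3=0$ (which the discriminant excludes automatically). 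What the paper's finer decomposition buys is the explicit singular points on each degenerate fibre --- one node over the roots of $g_1$, four nodes over the roots of $g_2$ --- which is precisely the input for the genus and reducibility arguments of Proposition \ref{prop:degen}; your route would need a second pass to recover them. One parenthetical slip: not all roots of $g_2$ lie on the unit circle (those of $x^4+3x^2+1$ are purely imaginary of modulus $\tfrac12(\sqrt5\pm1)$; palindromicity only forces closure under $x\mapsto 1/x$). The coprimality you need still holds, since $g_1$ has only real roots while $g_2$ has none, and the four irreducible factors are pairwise distinct, so $g$ is indeed squarefree and $\mathcal{S}=\Spec\mathbb{C}[x]/(g(x))$ follows as you say.
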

\begin{proof}
In the equation $C_\alpha$ we eliminate variable $z_2$ by substitution from the second equation. The resulting quartic polynomial 
\[F(z_0,z_1,z_3)= z_0^4+z_1^4+z_3^4+\alpha^4 z_3^4-6 \left(\alpha^2 z_3^4+\left(\alpha^2+1\right) \left(z_0^2+z_1^2\right) z_3^2+z_0^2 z_1^2\right)\]
defines a quartic curve $F(z_0,z_1,z_3)=0$ in the projective plane $\mathbb{P}^{2}_{z_0,z_1,z_3}$. We compute $F_0=\frac{\partial F}{\partial z_0}$,  $F_1=\frac{\partial F}{\partial z_1}$ and $F_3=\frac{\partial F}{\partial z_3}$ and consider an ideal $I=\langle F_0,F_1,F_3,F\rangle$ in $R=\mathbb{Z}[z_0,z_1,z_3,\alpha]$. It defines a scheme $\Spec R/I$ whose reduced subscheme is the union of the following irreducible components
\begin{align*}
	\mathcal{G}_{1,1}&:  z_0=z_1=\alpha^2+2\alpha+1=0,\\
	\mathcal{G}_{1,2}&:  z_0=z_1=\alpha^2-2\alpha+1=0,\\
	\mathcal{G}_{2,1}&: z_0 =z_1^2 - 3\alpha^2 z_3^2  - 3z_3^2=\alpha^4+3\alpha^2+1=0,\\
	\mathcal{G}_{2,2}&: z_1 =z_0^2 - 3\alpha^2 z_3^2  - 3z_3^2=\alpha^4+3\alpha^2+1=0,\\
	\mathcal{G}_{2,3}&: z_0+z_1=z_1^2 + 3/2 \alpha^2 z_3^2 + 3/2 z_3^2=\alpha^4 + 6/5\alpha^2 + 1=0,\\
	\mathcal{G}_{2,4}&: z_0-z_1=z_1^2 + 3/2 \alpha^2 z_3^2 + 3/2 z_3^2=\alpha^4 + 6/5\alpha^2 + 1=0,\\
    \mathcal{H}&: z_0=z_1=z_3=0.
\end{align*}
All but the last component $\mathcal{H}$ correspond to a choice of $\alpha\in\mathbb{C}$ and a projective point $(z_0:z_1:z_3)$ which is singular on the curve $C_{\alpha}$.
\end{proof}

 \begin{proposition}\label{prop:degen}
 Assume the notation from Proposition \ref{prop:sing_quartics}. If $\alpha\in\mathbb{C}$ is a root of $g_1(x)$, then $C_{\alpha}$ is a singular model of a geometrically irreducible curve of geometric genus $2$.
 	
If $\alpha$ is such that $g_2(\alpha)=0$, then $C_{\alpha}$ is a union of two irreducible conics which are defined over $\mathbb{Q}(i,\sqrt{2},\sqrt{5})$.

The set of $8$ pairs of conics $C_{\alpha}$ such that $g_2(\alpha)=0$ is parametrized by $\alpha\in \{\pm \frac{1}{2}(\sqrt{5}\pm 1), \pm \frac{1}{\sqrt{5}} (1\pm 2i)\}$.
\end{proposition}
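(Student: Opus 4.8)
The plan is to prove the three claims by splitting according to which irreducible factor of $g=g_1g_2$ annihilates $\alpha$, in each case reading the geometry of the plane quartic $\{F=0\}\subset\mathbb{P}^{2}_{z_0,z_1,z_3}$ (which is $C_\alpha$) off the singular locus of Proposition~\ref{prop:degeneration}. Throughout I use that $g_1$, $x^4+3x^2+1$ and $5x^4+6x^2+5$ are pairwise coprime, so a root of one is a root of none of the others; in particular $\alpha^2\neq-1$.

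\textbf{Case $g_1(\alpha)=0$.} Then $\alpha^4-6\alpha^2+1=0$, and by Proposition~\ref{prop:degeneration} the only singular point of $C_\alpha$ is $P=(0:0:1)$ (the components $\mathcal{G}_{1,1},\mathcal{G}_{1,2}$ both describe $P$, while $\mathcal{G}_{2,1},\dots,\mathcal{G}_{2,4}$ impose relations on $\alpha$ sharing no root with $g_1$). Expanding $F$ in the chart $z_3=1$ about $P$: since $F$ is even in $z_0$ and in $z_1$ there is no linear part, the constant part is $g_1(\alpha)=0$, and the quadratic part is $-6(\alpha^2+1)(z_0^2+z_1^2)$, a rank-$2$ form since $\alpha^2\neq-1$; so $P$ is an ordinary node. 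Geometric irreducibility follows: $C_\alpha$ is reduced (its singular locus is finite), and a reduced plane quartic that splits as $C'\cup C''$ with $\deg C',\deg C''\geq1$ is singular along $C'\cap C''$, which by B\'ezout contains a point of local intersection multiplicity $\deg C'\cdot\deg C''\in\{3,4\}$; at such a point the singularity is worse than a node, contradicting that $P$ is the unique singular point of $C_\alpha$ and is a node. A geometrically irreducible plane quartic with a single node has geometric genus $\binom{3}{2}-1=2$, proving the first claim. (One cannot instead specialize Proposition~\ref{prop:sing_quartics}, since the specialization is reducible precisely when $g_2(\alpha)=0$.)

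\textbf{Case $g_2(\alpha)=0$.} Now Proposition~\ref{prop:degeneration} provides four distinct singular points, so $C_\alpha$ is not an irreducible quartic (its geometric genus would be $3-4<0$) and, being reduced, is a union of two conics. To identify them I use that $F$ is even in each of $z_0,z_1,z_3$, so $F=\Phi(z_0^2,z_1^2,z_3^2)$ for a ternary quadratic form $\Phi$; the evenness strongly constrains the possible shape of a factorization of $F$ into two conics, and leads to the identities
\[
\alpha^4+3\alpha^2+1=0:\quad F=\bigl(z_0^2+2\sqrt{2}\,z_0z_1+z_1^2-3(\alpha^2+1)z_3^2\bigr)\bigl(z_0^2-2\sqrt{2}\,z_0z_1+z_1^2-3(\alpha^2+1)z_3^2\bigr),
\]
\[
5\alpha^4+6\alpha^2+5=0:\quad F=-\tfrac14\bigl((1+\sqrt{2})z_0^2+(1-\sqrt{2})z_1^2+3(\alpha^2+1)z_3^2\bigr)\bigl((1-\sqrt{2})z_0^2+(1+\sqrt{2})z_1^2+3(\alpha^2+1)z_3^2\bigr),
\]
each checked by a direct expansion using the relation on $\alpha$: in the first, matching coefficients leaves the single constraint $9(\alpha^2+1)^2=\alpha^4-6\alpha^2+1$, which is $\alpha^4+3\alpha^2+1=0$; in the second, $\Phi$ admits a diagonal factorization exactly when its Gram matrix is singular, and that determinant is a nonzero scalar multiple of $5\alpha^4+6\alpha^2+5$. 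Each quadratic factor is an irreducible conic: after the substitution $z_0\mapsto z_0\pm\sqrt{2}\,z_1$ each factor of the first pair becomes the diagonal conic $z_0^2-z_1^2-3(\alpha^2+1)z_3^2$, the second pair is already diagonal, and in every case the diagonal entries are nonzero because $\alpha^2\neq-1$ and $1\pm\sqrt{2}\neq0$. Pulling back along $z_2=-\alpha z_3$ writes $C_\alpha$ as a union of two irreducible conics, whose coefficients involve only $\sqrt{2}$ and $\alpha^2\in\mathbb{Q}(\sqrt5)$; together with the plane $z_2+\alpha z_3=0$ defined over $\mathbb{Q}(\alpha)$, they are defined over $K$ once one knows $\alpha\in K$, which the last step supplies.

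\textbf{The list of $\alpha$.} Solving the two quartics: $x^4+3x^2+1=0$ gives $x^2=\tfrac{-3\pm\sqrt5}{2}=-\bigl(\tfrac{\sqrt5\mp1}{2}\bigr)^2$, so the four roots are $\pm\tfrac{(\sqrt5+1)i}{2}$ and $\pm\tfrac{(\sqrt5-1)i}{2}$; and $5x^4+6x^2+5=0$ gives $x^2=\tfrac{-3\pm4i}{5}=\bigl(\tfrac{1\pm2i}{\sqrt5}\bigr)^2$, so the four roots are $\pm\tfrac{1+2i}{\sqrt5}$ and $\pm\tfrac{1-2i}{\sqrt5}$. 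All eight lie in $K=\mathbb{Q}(i,\sqrt2,\sqrt5)$, which completes the previous paragraph and exhibits the parametrization of the eight pairs of conics; note $\alpha=A=\tfrac{(\sqrt5+1)i}{2}$ recovers the conic $C_3$ from the Introduction. I expect the only genuinely substantive step to be discovering the two factorizations in the second case; once they are written down the rest is routine, the one delicate point being that the quadratic factors are genuinely irreducible over $\overline{\mathbb{Q}}$, which comes down to $\alpha^2\neq-1$ and $1\pm\sqrt2\neq0$.
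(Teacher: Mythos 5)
Your argument is correct and follows the same skeleton as the paper's proof (read the singular fibres off Proposition~\ref{prop:degeneration}; Pl\"ucker plus B\'ezout handle the $g_1$ case; a negative-genus obstruction forces reducibility in the $g_2$ case), but you replace the one step the paper delegates to a black box --- the ``simple Gr\"obner basis calculation'' with undetermined coefficients that produces the splitting into two conics --- by explicit closed-form factorizations of $F$ obtained from its evenness in each variable. This is a genuine gain in verifiability: the identity $F=\bigl(z_0^2+2\sqrt2\,z_0z_1+z_1^2-3(\alpha^2+1)z_3^2\bigr)\bigl(z_0^2-2\sqrt2\,z_0z_1+z_1^2-3(\alpha^2+1)z_3^2\bigr)$ for $\alpha^4+3\alpha^2+1=0$ checks out and visibly recovers $C_3$ at $\alpha=A$, and your rank-$2$ computation at $(0:0:1)$ together with the concentration of the B\'ezout intersection number at the unique singular point makes precise what the paper leaves as ``it follows from Bezout''.

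Two corrections, neither of which damages the argument. First, the prefactor in your second factorization is wrong: since $(1+\sqrt2)(1-\sqrt2)=-1$, matching the coefficient of $z_0^4$ forces the constant $-1$, not $-\tfrac14$; with that fix the coefficients of $z_0^2z_1^2$ and $z_i^2z_3^2$ match identically and the $z_3^4$ coefficient gives $-9(\alpha^2+1)^2=\alpha^4-6\alpha^2+1$, i.e.\ exactly $5\alpha^4+6\alpha^2+5=0$, as you intend. Second, your list of the roots of $x^4+3x^2+1$, namely $\pm\tfrac{i}{2}(\sqrt5\pm1)$, is the correct one; it is the printed statement of the proposition that has dropped the factor $i$ from its first four values (indeed $\tfrac12(\sqrt5+1)$ is a root of neither factor of $g_2$, whereas $A=\tfrac{(\sqrt5+1)i}{2}$ is, consistent with the definition of $C_3$). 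Your own closing remark already points at this discrepancy; it is worth stating explicitly that you are correcting the statement rather than deviating from it.
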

\begin{proof}

\textbf{$\mathcal{G}_{1,j}$ components:} If $\alpha$ is a root of $g_1(x)$, then we have a unique nodal singularity at $(0:0:1)$. Hence, it follows from the Pl\"{u}cker formula that the geometric genus of $C_{\alpha}$ is $(4-1)(4-2)/2-1 = 2$. It follows from Bezout theorem that the curve $C_{\alpha}$ is geometrically irreducible.

\textbf{$\mathcal{G}_{2,j}$ components:} When $\alpha$ is a root of $g_2(x)$ we find four nodal singularities on $C_{\alpha}$. Suppose that the curve were geometrically irreducible. Then Pl\"{u}cker formula would imply that the geometric genus of $C_{\alpha}$ equals $-1$, a contradiction. Hence, $C_{\alpha}$ is geometrically reducible. We form a product of quadratic polynomials in $z_0,z_1,z_3$ with unknown coefficients to reveal the factorization in each case. A simple Gr\"{o}bner basis calculation leads to the conclusion that $C_{\alpha}$ is a union of two irreducible conics defined over $K$.
\end{proof}

\section{More Gr\"{o}bner bases computations}\label{sec:grobner_more}
Suppose we consider a general conic $C$ on the surface $X$. It follows that there exists a hyperplane $H$ such that $H\cap X$ is a union $C\cup \widetilde{C}$ of two conics \cite[Lem. 2.2.6]{Deland}. Let $H_{a,b,c,d}: d z_0+c z_1 +b z_2+ a z_3  = 0$ be a general equation of the hyperplane. Finding a projective quadruple $[a:b:c:d]$ which corresponds to the splitting $H\cap X = C \cup \widetilde{C}$ is equivalent to finding all factorizations into quadrics of the polynomial which we obtain under elimination of one of the variables $z_i$ under substitution into the equation defining $X$. So we consider the following cases:
\begin{itemize}
	\item[(i)] $d=1$,
	\item[(ii)] $d = 0$, $c=1$,
	\item[(iii)] $d = 0$, $c=0$, $b=1$,
	\item[(iv)] $d=c=b=0$, $a=1$.
\end{itemize}
Let $f(z_0,z_1,z_2,z_3) = \sum_{i=0}^{3} z_i^4-6 \sum_{i<j} z_i^2 z_j^2$ and 
$$c_{m}(T_0,T_1) = m_1 T_0^2+m_2 T_0+m_3+m_4 T_1^2+m_5 T_1+m_6 T_0 T_1$$ 
where $m=(m_1,m_2,m_3,m_4,m_5,m_6)$. 
Let $A=(a_1,a_2,a_3,a_4,a_5,a_6)$ and $B=(b_1,b_2,b_3,b_4,b_5,b_6)$.

\medskip
Case (iv). Equation $f(z_0,z_1,z_2,0)=0= z_3$ defines a non-singular curve of genus $3$ in $\mathbb{P}^{3}$.

Case (iii). This was considered above and defines $16$ irreducible conics.

Case (ii). Here we have $f(z_0,-az_3-bz_2,z_2,z_3)\mid _{z_3=1} = c_{A}(z_2,z_0)\cdot c_{B}(z_2,z_0)$ which defines a system of equations in variables $(a_),(b_i),a,b$ for which we find the minimal reduced Gr\"{o}bner basis with respect to the lexicographical order within seconds (on a standard laptop). This defines equations of $64$ conics where the final equation in the variable $b$ is
\[(b-1) b (b+1) \left(b^2+4\right) \left(4 b^2+1\right) \left(b^4+3 b^2+1\right) \left(5 b^4+6 b^2+5\right)=0.\]

Case (i). Now, the setup is $f(-az_3-bz_2-cz_1,z_1,z_2,z_3)\mid _{z_3=1} = c_{A}(z_2,z_1)\cdot c_{B}(z_2,z_1)$. Computation of the minimal reduced Gr\"{o}bner basis with respect to $A,B,a,b,c$ (in lexicographical order) was performed on a single Intel Xeon 2.7 GHz and took approximately 12.5h and less than 1GB of RAM memory, using the default Gr\"{o}bner basis implementation in Magma V2.24-4. The basis has 186 equations, the final one in $c$ is
\[\begin{split}
(c-1) c (c+1) \left(c^2+1\right) \left(c^2+4\right) \left(c^2-4 c-1\right) \left(c^2-4 c+5\right) \left(c^2-c-1\right) \left(c^2+c-1\right)\cdot\\ \left(c^2+4 c-1\right) \left(c^2+4 c+5\right) \left(4 c^2+1\right) \left(5 c^2-6 c+5\right) \left(5 c^2-4 c+1\right)\cdot\\
 \left(5 c^2+4 c+1\right) \left(5 c^2+6 c+5\right) \left(c^4+3 c^2+1\right) \left(c^4+18 c^2+1\right) \left(5 c^4-6 c^2+5\right) \left(5 c^4+6 c^2+5\right)=0.
 \end{split}\]
This basis defines 720 conics on X. 
These computations lead to the following corollary which is proved using only standard computer algebra techniques.

\begin{corollary}\label{cor:conics_with_groebner}
There exist exactly 800 conics on the surface X.
\end{corollary}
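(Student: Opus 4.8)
The plan is to turn the enumeration of conics into a finite collection of polynomial systems, one for each normalization of the cutting plane, and then to count their solutions. By \cite[Lem.~2.2.6]{Deland} every irreducible conic $C\subset X$ is a component of a plane section $H\cap X$, where $H=\langle C\rangle$ is the unique plane spanned by $C$; since $\deg(H\cap X)=4$, the residual divisor $H\cap X-C$ has degree $2$. Hence listing, for every plane $H$, the irreducible conic components of $H\cap X$ produces every conic on $X$, and each conic exactly once (it occurs only for $H=\langle C\rangle$). Writing a general plane as $H_{a,b,c,d}\colon dz_0+cz_1+bz_2+az_3=0$, the four cases (i) $d=1$, (ii) $d=0,\,c=1$, (iii) $d=c=0,\,b=1$, (iv) $d=c=b=0,\,a=1$ partition all planes, so a conic counted in one case is never counted in another. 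The enumeration therefore splits as a sum of four contributions.

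Cases (iv) and (iii) are already in hand. In case (iv) the section $f(z_0,z_1,z_2,0)=0$ is a smooth plane quartic of geometric genus $3$, hence contains no conic; this contributes $0$. Case (iii) is precisely the family $C_\alpha$ of Section~\ref{sec:heuristic}: by Propositions~\ref{prop:degeneration} and~\ref{prop:degen} the section $C_\alpha$ is a union of two irreducible conics exactly when $g_2(\alpha)=0$, which happens for $8$ values of $\alpha$ and yields the $16$ conics listed there (for the roots of $g_1$ one gets an irreducible genus-$2$ curve, contributing nothing). In cases (ii) and (i) I would substitute the plane equation into $f$, dehomogenize by $z_3=1$ (legitimate, since no conic of $X$ lies in $z_3=0$ unless its plane is $z_3=0$, i.e. case (iv)), and impose the polynomial identity $f|_H=c_A\cdot c_B$ with $c_A,c_B$ general quadratics in the two surviving variables. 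Equating coefficients yields a zero-dimensional ideal in the unknowns $A=(a_1,\dots,a_6)$, $B=(b_1,\dots,b_6)$ and the free plane parameters; a lexicographic Gr\"obner basis brings it to triangular form, whose terminal member is the displayed univariate polynomial in $b$ (case (ii)) or in $c$ (case (i)). By construction this identity captures \emph{all} splittings of a plane section off a conic, including the degenerate shapes $C+2L$ and $C+L_1+L_2$.

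From each solution scheme I would then extract the honest number of irreducible conics: the sextuple $(m_1,\dots,m_6)$ defining $c_m$ matters only projectively, and the unordered pair $\{c_A,c_B\}$ corresponds to a single plane, so solutions related by rescaling $A$, rescaling $B$, or the swap $A\leftrightarrow B$ must be identified; one must discard solutions in which a factor is a square of a linear form or a product of two linear forms, and verify that the surviving quadratic factors are geometrically irreducible. After this bookkeeping, case (ii) contributes $64$ conics and case (i) contributes $720$. Summing the four contributions gives $0+16+64+720=800$, and since every conic on $X$ has been accounted for, $X$ carries exactly $800$ conics — recovering the count of \cite{Alex1,Alex2} by pure elimination theory. (All conics found are defined over $\overline{\mathbb{Q}}$, and the Gr\"obner computations over $\mathbb{Q}$ capture every $\mathbb{C}$-point because the solution sets are finite.)

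The main obstacle is twofold. Computationally, the case-(i) system has twelve unknowns in $A,B$ together with three plane parameters and a degree-$4$ defining identity; its lexicographic Gr\"obner basis (186 polynomials) is only barely within reach, taking about $12.5$ hours, and the choice of monomial order is what keeps the run terminating with manageable memory. Conceptually, the delicate point is the passage from the triangular systems to the geometric count: note that the terminal univariate polynomials have far smaller degree than the conic counts $64$ and $720$, because many $(A,B,a,b)$ (resp. $(A,B,a,b,c)$) solutions sit over each value of $b$ (resp. $c$). One must argue carefully that the solution schemes are finite, that the imposed identity misses no splitting, and then quotient correctly by the scaling and swap symmetries while discarding the line-type degeneracies — it is precisely a mishandled symmetry count here that would change the answer by a large amount.
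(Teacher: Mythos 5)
Your proposal is correct and follows essentially the same route as the paper: reduce to plane sections via \cite[Lem.~2.2.6]{Deland}, normalize the plane into the four cases (i)--(iv), impose the factorization $f|_H=c_A\cdot c_B$ and count solutions of the resulting zero-dimensional Gr\"obner systems, obtaining $0+16+64+720=800$. Your explicit attention to the scaling/swap symmetries and to discarding line-type degenerate factors is a useful elaboration of bookkeeping that the paper delegates to the Magma code in Appendix~A.
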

\begin{proof}
Every conic $C$ on $X$ corresponds to a unique hypersurface $H$ in $\mathbb{P}^3$. The Gr\"{o}bner bases setup (i)--(iii) shows that there are exactly $400$ such hyperplanes, producing in total $800$ irreducible conics.
\end{proof}

\begin{remark}
Notice that this proof uses no group action on the surface $X$ and is independent of the work \cite{Alex1,Alex2}. In particular, this raises a question of whether such a straightforward computational method of enumerating conics can be applied to a general smooth quartic and be used to show that the upper bound on the number of irreducible conics on such surfaces equals $800$. One should realize that the length of the minimal reduced Gr\"{o}bner basis computation expands in time and space exponentially.
\end{remark}

\section*{Acknowledgments}
I would like to thank Alex Degtyarev for the inspiration that gave the impetus for this work. Without his papers I would never have tried to solve this problem. Thanks goes also to Xavier Roulleau who pointed us to a projective model of a quartic which satisfied the right conditions. I would like to thank C\'edric Bonnaf\'e and Alessandra Sarti for finding the first 320 conics on~$X$. This has lowered the amount of my effort by $40\%$. I would like to also thank Dino Festi for discussions about this project. We are also grateful to the University of Bristol for providing us with the access to Magma cluster CREAM.

\section*{Appendix A: code}
This appendix contains a complete Magma code which allows an interested user to reproduce the data and computations described in Proposition \ref{prop:conics} and Proposition \ref{prop:degeneration} and Corollary \ref{cor:conics_with_groebner}.

\medskip
\noindent
\textbf{Magma code which reproduces the data from Proposition \ref{prop:conics}}
\medskip
\begin{verbatim}
Q<sq2,sq5>:=NumberField([Polynomial([-2,0,1]),Polynomial([-5,0,1])]);
R<T>:=PolynomialRing(Q);
pol:=T^2 + 1/2*(sq5 + 3);
Qext<Am>:=ext<Q|pol>;
i:=Sqrt(Qext!(-1));
assert Am eq i/2*(-1-sq5);
A:=-Am;
s1:=Matrix(4,4,[1,0,0,0,0,1,0,0,0,0,1,0,0,0,0,-1]);
s2:=1/2*Matrix(4,4,[1,1,i,i,1,1,-i,-i,-i,i,1,-1,-i,i,-1,1]);
s3:=Matrix(4,4,[0,1,0,0,1,0,0,0,0,0,1,0,0,0,0,1]);
s4:=Matrix(4,4,[1,0,0,0,0,0,1,0,0,1,0,0,0,0,0,1]);
Gmu:=MatrixGroup<4,Qext|[s1,s2,s3,s4]>;
assert Order(Gmu) eq 7680;

P<z0,z1,z2,z3>:=ProjectiveSpace(Qext,3);
f:=z0^4+z1^4+z2^4+z3^4-6*(z0^2*z1^2+z0^2*z2^2
+z0^2*z3^2+z1^2*z2^2+z1^2*z3^2+z2^2*z3^2);
Xmu:=Surface(P,[f]);
v:=Matrix(4,1,[z0,z1,z2,z3]);
function LA(M,tup)
return Eltseq(ChangeRing(M,CoordinateRing(P))*tup);
end function;
Omegaf:=func<C|{Curve(Scheme(P,[Evaluate(x,LA(el,v)): 
	x in DefiningPolynomials(C)] )): el in Gmu}>;
Omegaelf:=func<C,g|Curve(Scheme(P,[Evaluate(x,LA(g,v)): 
x in DefiningPolynomials(C)] ))>;

//known conics
CC:=IrreducibleComponents(Xmu meet Scheme(P,z0+z1+z2));
Cp:=Curve(CC[1]);
Cm:=Curve(CC[2]);
Omegap:={Curve(Scheme(P,[Evaluate(x,LA(el,v)): 
	x in DefiningPolynomials(Cp)] )): el in Gmu};
Omegam:={Curve(Scheme(P,[Evaluate(x,LA(el,v)): 
	x in DefiningPolynomials(Cm)] )): el in Gmu};
assert #Omegap eq 160;
assert #Omegam eq 160;
assert  #(Omegam meet Omegap) eq 0;

//new conics
CC2:=Xmu meet Scheme(P,[z2+A*z3]);
IrrCC2:=IrreducibleComponents(CC2);
CC21:=Curve(IrrCC2[1]);
CC22:=Curve(IrrCC2[2]);
oo1:=Omegaf(CC21);
oo2:=Omegaf(CC22);
assert #oo1 eq 480;
assert oo1 eq oo2;

Conics800:=Omegap join Omegam join oo1;
\end{verbatim}

\medskip
\noindent
\textbf{Magma code which reproduces the data from Proposition \ref{prop:degeneration}}
\medskip

\begin{verbatim}
	R<z0,z1,z3,A>:=AffineSpace(Rationals(),4);
	eq1:=[4*z0*(z0^2 - 3*z1^2 - 3*z3^2 - 3*A^2*z3^2), 
	-4*z1*(3*z0^2 - z1^2 + 3*z3^2 + 3*A^2*z3^2), 
	-4*z3*(3*z0^2 + 3*A^2*z0^2 + 3*z1^2 + 3*A^2*z1^2 - z3^2 + 6*A^2*z3^2 - A^4*z3^2), 
	z0^4 - 6*z0^2*z1^2 + z1^4 - 6*z0^2*z3^2 - 6*A^2*z0^2*z3^2 - 6*z1^2*z3^2 - 
	6*A^2*z1^2*z3^2 + z3^4 - 6*A^2*z3^4 + A^4*z3^4];
	S:=Scheme(R,eq1);
	redS:=ReducedSubscheme(S);
	irrS:=IrreducibleComponents(redS);
	print(irrS);

	Qext<alpha>:=NumberField(Polynomial([1,0,0,0,7,0,0,0,1]));
	irrSK:=[IrreducibleComponents(ChangeRing(el,Qext)): el in irrS];
	print(irrSK);
\end{verbatim}

\medskip
\noindent
\textbf{Magma code which reproduces the data from Corollary \ref{cor:conics_with_groebner}}
\medskip

\begin{verbatim}
//case (i)
R<a1,a2,a3,a4,a5,a6,b1,b2,b3,b4,b5,b6,a,b,c>:=PolynomialRing(Rationals(),15);
eqq:=[-1 + 6*a^2 - a^4 + a3*b3, 12*a*b - 4*a^3*b + a3*b2 + a2*b3,
6 + 6*a^2 + 6*b^2 - 6*a^2*b^2 + a3*b1 + a2*b2 + a1*b3, 
12*a*b - 4*a*b^3 + a2*b1 + a1*b2,
-1 + 6*b^2 - b^4 + a1*b1, a5*b3 + a3*b5 + 12*a*c - 4*a^3*c,
a5*b2 + a6*b3 + a2*b5 + a3*b6 + 12*b*c - 12*a^2*b*c, 
a5*b1 + a6*b2 + a1*b5 + a2*b6 + 12*a*c -
12*a*b^2*c, a6*b1 + a1*b6 + 12*b*c - 4*b^3*c, 
6 + 6*a^2 + a4*b3 + a3*b4 + a5*b5 + 6*c^2 -
6*a^2*c^2, 12*a*b + a4*b2 + a2*b4 + a6*b5 + a5*b6 - 12*a*b*c^2,
6 + 6*b^2 + a4*b1 + a1*b4 + a6*b6 + 6*c^2 - 6*b^2*c^2, 
a5*b4 + a4*b5 + 12*a*c - 4*a*c^3,
a6*b4 + a4*b6 + 12*b*c - 4*b*c^3, -1 + a4*b4 + 6*c^2 - c^4];
time gro3:=GroebnerBasis(eqq);
gr3:=[gro3[i]: i in [1..#gro3]| &and[Degree(gro3[i],R.j) eq 0: j in [1..12]]];
Aff<A,B,C>:=AffineSpace(Rationals(),3);
S3:=Scheme(Aff,[Evaluate(x,[0: i in [1..12]] cat [A,B,C]): x in gr3]);

Z<T>:=PolynomialRing(Rationals());
K<theta>:=SplittingField(T^4+3*T^2+1);
K2<theta2>:=NumberField(T^8 + 7*T^4 + 1);

assert #RationalPoints(S3,K) eq Degree(S3);
assert #RationalPoints(S3,K) eq 360;

SS3:=Scheme(AffineSpace(R),gro3 cat [a4-1]);
bb:=gro3 cat [a4-1];
irrS3:=IrreducibleComponents(S3);

cr:=0;
ratli:=[];
for co in irrS3 do
pp:=DefiningPolynomials(co);
ppev:=[Evaluate(x,[a,b,c]): x in pp];
bbv:=bb cat ppev;
time bbvg:=GroebnerBasis(bbv);
ss:=Scheme(AffineSpace(R),bbvg);
ssK2:=ChangeRing(ss,K2);
rr:=RationalPoints(ssK2);
cr:=cr+#rr;
ratli:=ratli cat [Eltseq(x): x in rr];
print cr;
end for;
assert cr eq 720;
\end{verbatim}

\begin{verbatim}
//case (ii)
R<a1,a2,a3,a4,a5,a6,b1,b2,b3,b4,b5,b6,a,b>:=PolynomialRing(Rationals(),14);
eq2:=[-1 + 6*a^2 - a^4 + a3*b3, 12*a*b - 4*a^3*b + a3*b2 + a2*b3, 
6 + 6*a^2 + 6*b^2 - 6*a^2*b^2 + a3*b1 + a2*b2 + a1*b3, 
12*a*b - 4*a*b^3 + a2*b1 + a1*b2, 
-1 + 6*b^2 - b^4 + a1*b1, a5*b3 + a3*b5, a5*b2 + a6*b3 + a2*b5 + a3*b6, 
a5*b1 + a6*b2 + a1*b5 + a2*b6, a6*b1 + a1*b6, 6 + 6*a^2 + a4*b3 + a3*b4 + a5*b5, 
12*a*b + a4*b2 + a2*b4 + a6*b5 + a5*b6, 6 + 6*b^2 + a4*b1 + a1*b4 + a6*b6, 
a5*b4 + a4*b5, a6*b4 + a4*b6, -1 + a4*b4];
gro2:=GroebnerBasis(eq2);
gr2:=[gro2[i]: i in [1..#gro2]| &and[Degree(gro2[i],R.j) eq 0: j in [1..12]]];
Aff<A,B>:=AffineSpace(Rationals(),2);
S2:=Scheme(Aff,[Evaluate(x,[0: i in [1..12]] cat [A,B]): x in gr2]);

Z<T>:=PolynomialRing(Rationals());
K<theta>:=SplittingField(T^4+3*T^2+1);
K2<theta2>:=NumberField(T^8 + 7*T^4 + 1);

assert #RationalPoints(S2,K) eq Degree(S2);
assert #RationalPoints(S2,K) eq 32;


SS2:=Scheme(AffineSpace(R),gro2 cat [a4-1]);
SS2K2:=ChangeRing(SS2,K2);
rat2:=RationalPoints(SS2K2);
assert #rat2 eq 64;
\end{verbatim}

\begin{verbatim}
//case (iii)
R<a1,a2,a3,a4,a5,a6,b1,b2,b3,b4,b5,b6,a>:=PolynomialRing(Rationals(),13);
eq1:=[-1 + 6*a^2 - a^4 + a3*b3, a3*b2 + a2*b3, 
6 + 6*a^2 + a3*b1 + a2*b2 + a1*b3, a2*b1 + a1*b2, 
-1 + a1*b1, a5*b3 + a3*b5, a5*b2 + a6*b3 + a2*b5 + a3*b6, 
a5*b1 + a6*b2 + a1*b5 + a2*b6, 
a6*b1 + a1*b6, 6 + 6*a^2 + a4*b3 + a3*b4 + a5*b5, a4*b2 + a2*b4 + a6*b5 + a5*b6, 
6 + a4*b1 + a1*b4 + a6*b6, a5*b4 + a4*b5, a6*b4 + a4*b6, -1 + a4*b4];

gro1:=GroebnerBasis(eq1);
gr1:=[gro1[i]: i in [1..#gro1]| &and[Degree(gro1[i],R.j) eq 0: j in [1..12]]];
Aff<A>:=AffineSpace(Rationals(),1);
S1:=Scheme(Aff,[Evaluate(x,[0: i in [1..12]] cat [A]): x in gr1]);

Z<T>:=PolynomialRing(Rationals());
K<theta>:=SplittingField(T^4+3*T^2+1);
K2<theta2>:=NumberField(T^8 + 7*T^4 + 1);

assert #RationalPoints(S1,K) eq Degree(S1);
assert #RationalPoints(S1,K) eq 8;

SS1:=Scheme(AffineSpace(R),gro1 cat [a4-1]);
SS1K2:=ChangeRing(SS1,K2);
rat1:=RationalPoints(SS1K2);
assert #rat1 eq 16;
\end{verbatim}

\section*{Appendix B: Generators of the N\'{e}ron-Severi group}
We list here the equations of $20$ conics on the surface $X$ which span the N\'{e}ron-Severi group, and their Gram matrix. In Figure \ref{fig:NS_gram} we present the Gram matrix of a basis of $20$ conics spanning the N\'{e}ron-Severi group.  Each row in the Figure \ref{fig:bas} consists of two coefficient vectors, representing two equations of a given conic $C: \sum_{i\leq j} a_{i,j} z_{i}z_{j}=0, \sum_{i=0}^{3}b_{i} z_i=0$. The entries are vectors defined over $\mathbb{Q}(i=\sqrt{-1},\sigma_2=\sqrt{2},\sigma_5=\sqrt{5})$ ($\sigma_{10}=\sigma_2\sigma_5, \sigma_{-2}=i\sigma_2,\sigma_{-5}=i\sigma_5,\sigma_{-10}=i\sigma_{10}$). The rows of Figure \ref{fig:bas} correspond to vertices in the adjacency graph on Figure \ref{fig:NS_dual_graph}. Every edge of this graph represents the fact that two curves (which correspond to vertices) have multiplicity one intersection.

\section*{Appendix C: Kummer lattice}
We find a Kummer configuration $\mathcal{K}$ of $16$ disjoint conics on $X$. The conics which form our Kummer sublattice are stable under the action of a subgroup $H$ of order $128$. The group $H$ is spanned by the matrices $g_1=diag(-1,1,1,-1)$, $g_2=diag(i,i,i,i)$,
\[g_3=\left(
\begin{array}{cccc}
	0 & 0 & -1 & 0 \\
	0 & -1 & 0 & 0 \\
	-1 & 0 & 0 & 0 \\
	0 & 0 & 0 & 1 \\
\end{array}
\right),\quad g_4=\frac{i+1}{2}\left(
\begin{array}{cccc}
	0 & 1 & 0 & 1 \\
	-i & 0 & -i & 0 \\
	0 & 1 & 0 & -1 \\
	-i & 0 & i & 0 \\
\end{array}
\right)\]
such that $g_1^2=g_2^2=g_4^2=g_3^4=I$. A subgroup of $H$ which fixes all elements of the set $\mathcal{K}$ is cyclic of order $4$ and generated by $g_2$.
The notation in Figure \ref{fig:kum_bas} follows that in Appendix B.


\begin{figure}[htb]
	\[\arraycolsep=2pt\def\arraystretch{1}
	\left(
	\begin{array}{cccccccccccccccccccc}
		-2 & 0 & 0 & 0 & 0 & 0 & 0 & 0 & 0 & 1 & 0 & 0 & 0 & 1 & 0 & 0 & 0 & 0 & 0 & 0 \\
		0 & -2 & 0 & 0 & 0 & 0 & 0 & 0 & 0 & 0 & 1 & 0 & 0 & 1 & 0 & 0 & 0 & 0 & 0 & 0 \\
		0 & 0 & -2 & 0 & 0 & 0 & 0 & 0 & 0 & 0 & 0 & 0 & 0 & 0 & 0 & 0 & 1 & 0 & 0 & 0 \\
		0 & 0 & 0 & -2 & 0 & 0 & 0 & 0 & 0 & 1 & 1 & 0 & 1 & 0 & 0 & 0 & 0 & 0 & 0 & 0 \\
		0 & 0 & 0 & 0 & -2 & 0 & 0 & 0 & 0 & 0 & 0 & 0 & 0 & 1 & 0 & 0 & 1 & 0 & 0 & 0 \\
		0 & 0 & 0 & 0 & 0 & -2 & 0 & 0 & 0 & 0 & 0 & 0 & 1 & 1 & 0 & 0 & 0 & 0 & 0 & 0 \\
		0 & 0 & 0 & 0 & 0 & 0 & -2 & 0 & 0 & 0 & 0 & 0 & 1 & 0 & 1 & 0 & 0 & 0 & 0 & 0 \\
		0 & 0 & 0 & 0 & 0 & 0 & 0 & -2 & 0 & 0 & 0 & 0 & 0 & 0 & 0 & 1 & 1 & 0 & 0 & 0 \\
		0 & 0 & 0 & 0 & 0 & 0 & 0 & 0 & -2 & 0 & 0 & 0 & 1 & 0 & 0 & 1 & 0 & 0 & 0 & 0 \\
		1 & 0 & 0 & 1 & 0 & 0 & 0 & 0 & 0 & -2 & 0 & 0 & 0 & 0 & 0 & 0 & 0 & 0 & 0 & 1 \\
		0 & 1 & 0 & 1 & 0 & 0 & 0 & 0 & 0 & 0 & -2 & 0 & 0 & 0 & 0 & 0 & 0 & 0 & 1 & 0 \\
		0 & 0 & 0 & 0 & 0 & 0 & 0 & 0 & 0 & 0 & 0 & -2 & 0 & 0 & 0 & 0 & 0 & 0 & 1 & 0 \\
		0 & 0 & 0 & 1 & 0 & 1 & 1 & 0 & 1 & 0 & 0 & 0 & -2 & 0 & 0 & 0 & 0 & 0 & 0 & 0 \\
		1 & 1 & 0 & 0 & 1 & 1 & 0 & 0 & 0 & 0 & 0 & 0 & 0 & -2 & 0 & 0 & 0 & 0 & 0 & 0 \\
		0 & 0 & 0 & 0 & 0 & 0 & 1 & 0 & 0 & 0 & 0 & 0 & 0 & 0 & -2 & 0 & 0 & 0 & 0 & 0 \\
		0 & 0 & 0 & 0 & 0 & 0 & 0 & 1 & 1 & 0 & 0 & 0 & 0 & 0 & 0 & -2 & 0 & 0 & 0 & 0 \\
		0 & 0 & 1 & 0 & 1 & 0 & 0 & 1 & 0 & 0 & 0 & 0 & 0 & 0 & 0 & 0 & -2 & 0 & 0 & 0 \\
		0 & 0 & 0 & 0 & 0 & 0 & 0 & 0 & 0 & 0 & 0 & 0 & 0 & 0 & 0 & 0 & 0 & -2 & 0 & 1 \\
		0 & 0 & 0 & 0 & 0 & 0 & 0 & 0 & 0 & 0 & 1 & 1 & 0 & 0 & 0 & 0 & 0 & 0 & -2 & 0 \\
		0 & 0 & 0 & 0 & 0 & 0 & 0 & 0 & 0 & 1 & 0 & 0 & 0 & 0 & 0 & 0 & 0 & 1 & 0 & -2 \\
	\end{array}
	\right)\]
	\caption{Gram matrix of a particular choice of a basis conics in the N\'{e}ron-Severi group of $X$.}\label{fig:NS_gram}
\end{figure}

\begin{figure}[htb]
	\begin{tikzpicture}[scale=0.9]	
		
		\node(1) at (5,1){1};
		\node(2) at (3,2){2};
		\node(3) at (0,0){3};
		\node(4) at (5,4){4};
		\node(5) at (1,1){5};
		\node(6) at (2,2){6};
		\node(7) at (2,5){7};
		\node(8) at (0,2){8};
		\node(9) at (1,4){9};
		\node(10) at (5,2){10};
		\node(11) at (4,2){11};
		\node(12) at (3,3){12};	
		\node(13) at (2,4){13};
		\node(14) at (2,1){14};
		\node(15) at (2,6){15};
		\node(16) at (0,4){16};
		\node(17) at (0,1){17};
		\node(18) at (7,2){18};
		\node(19) at (4,3){19};
		\node(20) at (6,2){20};
		
		\draw (3)  --(17)--(8)--(16)--(9)--(13)--(4)--(10)--(1)--(14)--(5)--(17);
		\draw (14)--(6)--(13)--(7)--(15);
		\draw (14)--(2)--(11)--(19)--(12);
		\draw (11)--(4);
		\draw (10)--(20)--(18);
		
	\end{tikzpicture}
	\caption{Adjacency graph corresponding to the Gram matrix of a particular choice of a basis conics in the N\'{e}ron-Severi group of $X$.}\label{fig:NS_dual_graph}
\end{figure}

\newpage

{\pagestyle{plain}
	\begin{landscape}
		\tiny
		\begin{figure}
			\[\arraycolsep=2pt\def\arraystretch{2}
			\begin{array}{l||l|l|l|l|l|l|l|l|l|l||l|l|l|l}
				No & a_{0,0} & a_{0,1}& a_{0,2}& a_{0,3}& a_{1,1}& a_{1,2}& a_{1,3}& a_{2,2}& a_{2,3}& a_{3,3} & b_0& b_1& b_2& b_3\\
				\hline\hline
				1& 0 & 0 & 0 & 0 & 1 & 0 & 0 & \frac{i}{3}+\frac{\sigma _{-2}}{3}+\frac{\sigma _2}{6}+\frac{1}{6} & 0 & \frac{i}{3}-\frac{\sigma _{-2}}{3}-\frac{\sigma _2}{6}+\frac{1}{6} & 1 & \frac{\sigma _5}{5}-\frac{2 \sigma _{-5}}{5} & 0 & 0 \\
				2& 0 & 0 & 0 & 0 & 1 & 0 & 0 & \frac{i}{3}-\frac{\sigma _{-2}}{3}-\frac{\sigma _2}{6}+\frac{1}{6} & 0 & \frac{i}{3}+\frac{\sigma _{-2}}{3}+\frac{\sigma _2}{6}+\frac{1}{6} & 1 & \frac{2 \sigma _{-5}}{5}-\frac{\sigma _5}{5} & 0 & 0 \\
				3& 0 & 0 & 0 & 0 & 1 & 0 & -1 & \frac{3}{2}-\frac{\sigma _{10}}{2} & 0 & 1 & 1 & -1 & 0 & 1 \\
				4& 0 & 0 & 0 & 0 & 1 & 0 & 1 & \frac{\sigma _{10}}{2}+\frac{3}{2} & 0 & 1 & 1 & 1 & 0 & 1 \\
				5& 0 & 0 & 0 & 0 & 1 & 0 & 2 i & \frac{\sigma _{10}}{2}+\frac{3}{2} & 0 & \frac{3 \sigma _{10}}{2}+\frac{7}{2} & 1 & -1 & 0 & -2 i \\
				6& 0 & 0 & 0 & 0 & 1 & 0 & -2 i & \frac{3}{2}-\frac{\sigma _{10}}{2} & 0 & \frac{7}{2}-\frac{3 \sigma _{10}}{2} & 1 & 1 & 0 & -2 i \\
				7& 0 & 0 & 0 & 0 & 1 & 1 & 0 & 1 & 0 & \frac{\sigma _{10}}{2}+\frac{3}{2} & 1 & -1 & -1 & 0 \\
				8& 0 & 0 & 0 & 0 & 1 & 2 i & 0 & \frac{7}{2}-\frac{3 \sigma _{10}}{2} & 0 & \frac{3}{2}-\frac{\sigma _{10}}{2} & 1 & -1 & -2 i & 0 \\
				9& 0 & 0 & 0 & 0 & 1 & 2 i & 0 & \frac{3 \sigma _{10}}{2}+\frac{7}{2} & 0 & \frac{\sigma _{10}}{2}+\frac{3}{2} & 1 & 1 & 2 i & 0 \\
				10& 0 & 0 & 0 & 0 & 1 & -2 i-\sigma _{-5}+\frac{\sigma _2}{2} & \frac{\sigma _{-2}}{2}-\sigma _5-2 & \frac{\sigma _{-10}}{2}+\sigma _{-2}-\sigma _5-2 & 5 i+2 \sigma _{-5}+3 \sigma _2+\frac{3 \sigma _{10}}{2} & -\sigma _{-10}-2 \sigma _{-2}+\sigma _5+2 & 1 & -i & -\sigma _5-2 & 2 i+\sigma _{-5} \\
				11& 0 & 0 & 0 & 0 & 1 & -2 i+\sigma _{-5}-\frac{\sigma _2}{2} & -\frac{\sigma _{-2}}{2}+\sigma _5-2 & \frac{\sigma _{-10}}{2}-\sigma _{-2}+\sigma _5-2 & 5 i-2 \sigma _{-5}-3 \sigma _2+\frac{3 \sigma _{10}}{2} & -\sigma _{-10}+2 \sigma _{-2}-\sigma _5+2 & 1 & -i & \sigma _5-2 & 2 i-\sigma _{-5} \\
				12& 0 & 0 & 0 & 0 & 1 & \frac{i}{3}+\frac{\sigma _{-2}}{2}-\frac{2 \sigma _2}{3}+1 & -\frac{\sigma _{-10}}{3}+\frac{\sigma _{-5}}{3}+\frac{\sigma _5}{3}-\frac{\sigma _{10}}{6} & 1 & -\frac{\sigma _{-10}}{3}+\frac{\sigma _{-5}}{3}+\frac{\sigma _5}{3}-\frac{\sigma _{10}}{6} & i-\frac{5 \sigma _{-2}}{6}+\frac{1}{3} & 1 & \frac{2 \sigma _5}{5}-\frac{\sigma _{-5}}{5} & \frac{2 \sigma _5}{5}-\frac{\sigma _{-5}}{5} & 1 \\
				13& 0 & 0 & 0 & 0 & 1 & -\frac{2 i}{3}+\frac{\sigma _{10}}{6}-\frac{\sigma _{-10}}{3}+\frac{1}{3} & \frac{5 i}{3}-\frac{\sigma _{-10}}{6} & -\frac{i}{3}-\frac{\sigma _{-10}}{6} & \frac{2 i}{3}+\frac{\sigma _{-10}}{3}+\frac{\sigma _{10}}{6}+\frac{1}{3} & -1 & 1 & i+2 & 1 & i-2 \\
				14& 0 & 0 & 0 & 0 & 1 & \frac{5 i}{3}+\frac{\sigma _{-10}}{6} & \frac{2 i}{3}-\frac{\sigma _{-10}}{3}-\frac{\sigma _{10}}{6}+\frac{1}{3} & -1 & \frac{2 i}{3}+\frac{\sigma _{10}}{6}-\frac{\sigma _{-10}}{3}-\frac{1}{3} & \frac{i}{3}-\frac{\sigma _{-10}}{6} & 1 & 2-i & i+2 & 1 \\
				15& 0 & 0 & 0 & 0 & 1 & \frac{5 i}{3}+\frac{\sigma _{-10}}{6} & -\frac{2 i}{3}+\frac{\sigma _{-10}}{3}-\frac{\sigma _{10}}{6}+\frac{1}{3} & -1 & \frac{2 i}{3}-\frac{\sigma _{-10}}{3}-\frac{\sigma _{10}}{6}+\frac{1}{3} & \frac{\sigma _{-10}}{6}-\frac{i}{3} & 1 & i+2 & i-2 & 1 \\
				16& 0 & 0 & 0 & 0 & 1 & \frac{\sigma _{-10}}{6}-\frac{5 i}{3} & \frac{2 i}{3}+\frac{\sigma _{-10}}{3}-\frac{\sigma _{10}}{6}-\frac{1}{3} & -1 & \frac{2 i}{3}+\frac{\sigma _{-10}}{3}+\frac{\sigma _{10}}{6}+\frac{1}{3} & -\frac{i}{3}-\frac{\sigma _{-10}}{6} & 1 & -i-2 & i-2 & 1 \\
				17& 0 & 0 & 0 & 0 & 1 & \frac{2 i}{3}+\frac{\sigma _{-10}}{3}-\frac{\sigma _{10}}{6}-\frac{1}{3} & \frac{\sigma _{-10}}{6}-\frac{5 i}{3} & -\frac{i}{3}-\frac{\sigma _{-10}}{6} & \frac{2 i}{3}+\frac{\sigma _{-10}}{3}+\frac{\sigma _{10}}{6}+\frac{1}{3} & -1 & 1 & i+2 & -1 & 2-i \\
				18& 0 & 0 & 0 & 0 & 1 & \frac{i}{3}+\frac{2 \sigma _2}{3}-\frac{\sigma _{-2}}{2}+1 & -\frac{\sigma _{-10}}{3}-\frac{\sigma _{-5}}{3}-\frac{\sigma _5}{3}-\frac{\sigma _{10}}{6} & 1 & -\frac{\sigma _{-10}}{3}-\frac{\sigma _{-5}}{3}-\frac{\sigma _5}{3}-\frac{\sigma _{10}}{6} & i+\frac{5 \sigma _{-2}}{6}+\frac{1}{3} & 1 & \frac{\sigma _{-5}}{5}-\frac{2 \sigma _5}{5} & \frac{\sigma _{-5}}{5}-\frac{2 \sigma _5}{5} & 1 \\
				19& 1 & -2 \sigma _2 & 0 & 0 & 1 & 0 & 0 & 0 & 0 & \frac{3 \sigma _5}{2}+\frac{3}{2} & 0 & 0 & 1 & -\frac{i}{2}-\frac{\sigma _{-5}}{2} \\
				20& 1 & 2 \sigma _2 & 0 & 0 & 1 & 0 & 0 & 0 & 0 & \frac{3}{2}-\frac{3 \sigma _5}{2} & 0 & 0 & 1 & \frac{\sigma _{-5}}{2}-\frac{i}{2} \\
			\end{array}\]
			\caption{Equations of $20$ conics which span the N\'{e}ron-Severi group of $X$.}\label{fig:bas}
		\end{figure}
\end{landscape}}

\newpage
{\pagestyle{plain}
	\begin{landscape}
		\tiny
		\begin{figure}
			\[\arraycolsep=2pt\def\arraystretch{2}
			\begin{array}{l|l|l|l|l|l|l|l|l|l||l|l|l|l}
				a_{0,0} & a_{0,1}& a_{0,2}& a_{0,3}& a_{1,1}& a_{1,2}& a_{1,3}& a_{2,2}& a_{2,3}& a_{3,3} & b_0& b_1& b_2& b_3\\
				\hline\hline
				0 & 0 & 0 & 0 & 1 & 0 & 0 & -\frac{\sigma _5}{6}-\frac{1}{6} & -\frac{s_2}{3}-\frac{\sigma _{10}}{3} & -\frac{\sigma _5}{6}-\frac{1}{6} & 1 & \frac{\sigma _{-5}}{2}-\frac{i}{2} & 0 & 0 \\
				0 & 0 & 0 & 0 & 1 & 0 & 0 & -\frac{\sigma _5}{6}-\frac{1}{6} & \frac{s_2}{3}+\frac{\sigma _{10}}{3} & -\frac{\sigma _5}{6}-\frac{1}{6} & 1 & \frac{i}{2}-\frac{\sigma _{-5}}{2} & 0 & 0 \\
				0 & 0 & 0 & 0 & 1 & -2 s_2 & 0 & 1 & 0 & \frac{3}{2}-\frac{3 \sigma _5}{2} & 1 & 0 & 0 & \frac{\sigma _{-5}}{2}-\frac{i}{2} \\
				0 & 0 & 0 & 0 & 1 & 2 s_2 & 0 & 1 & 0 & \frac{3}{2}-\frac{3 \sigma _5}{2} & 1 & 0 & 0 & \frac{i}{2}-\frac{\sigma _{-5}}{2} \\
				0 & 0 & 0 & 0 & 1 & -\frac{\sigma _{-10}}{3}-\frac{\sigma _{-2}}{3}-\frac{2 \sigma _5}{3}+\frac{4}{3} & -\frac{\sigma _{-10}}{3}+\sigma _{-2}+\frac{\sigma _5}{3}-1 & \frac{\sigma _{-10}}{3}-\frac{\sigma _{-2}}{3}-\frac{\sigma _5}{3}+\frac{1}{3} & -\frac{\sigma _{-10}}{3}+\sigma _{-2}-\frac{2 \sigma _5}{3} & -\frac{1}{3} 2 \sigma _{-2}-\frac{1}{3} & 1 & \frac{\sigma _5}{2}+\frac{1}{2} & -1 & -\frac{\sigma _5}{2}-\frac{1}{2} \\
				0 & 0 & 0 & 0 & 1 & -\frac{\sigma _{-10}}{3}-\frac{\sigma _{-2}}{3}-\frac{2 \sigma _5}{3}+\frac{4}{3} & \frac{\sigma _{-10}}{3}-\sigma _{-2}-\frac{\sigma _5}{3}+1 & \frac{\sigma _{-10}}{3}-\frac{\sigma _{-2}}{3}-\frac{\sigma _5}{3}+\frac{1}{3} & \frac{\sigma _{-10}}{3}-\sigma _{-2}+\frac{2 \sigma _5}{3} & -\frac{1}{3} 2 \sigma _{-2}-\frac{1}{3} & 1 & -\frac{\sigma _5}{2}-\frac{1}{2} & 1 & -\frac{\sigma _5}{2}-\frac{1}{2} \\
				0 & 0 & 0 & 0 & 1 & -\frac{\sigma _{-10}}{3}+\frac{2 \sigma _5}{3}-\frac{\sigma _{-2}}{3}-\frac{4}{3} & -\frac{\sigma _{-10}}{3}+\sigma _{-2}-\frac{\sigma _5}{3}+1 & -\frac{\sigma _{-10}}{3}+\frac{\sigma _{-2}}{3}-\frac{\sigma _5}{3}+\frac{1}{3} & \frac{\sigma _{-10}}{3}-\sigma _{-2}-\frac{2 \sigma _5}{3} & \frac{2 \sigma _{-2}}{3}-\frac{1}{3} & 1 & -\frac{\sigma _5}{2}-\frac{1}{2} & -1 & -\frac{\sigma _5}{2}-\frac{1}{2} \\
				0 & 0 & 0 & 0 & 1 & -\frac{\sigma _{-10}}{3}+\frac{2 \sigma _5}{3}-\frac{\sigma _{-2}}{3}-\frac{4}{3} & \frac{\sigma _{-10}}{3}-\sigma _{-2}+\frac{\sigma _5}{3}-1 & -\frac{\sigma _{-10}}{3}+\frac{\sigma _{-2}}{3}-\frac{\sigma _5}{3}+\frac{1}{3} & -\frac{\sigma _{-10}}{3}+\sigma _{-2}+\frac{2 \sigma _5}{3} & \frac{2 \sigma _{-2}}{3}-\frac{1}{3} & 1 & \frac{\sigma _5}{2}+\frac{1}{2} & 1 & -\frac{\sigma _5}{2}-\frac{1}{2} \\
				0 & 0 & 0 & 0 & 1 & \frac{\sigma _{-10}}{3}+\frac{\sigma _{-2}}{3}-\frac{2 \sigma _5}{3}+\frac{4}{3} & -\frac{\sigma _{-10}}{3}+\sigma _{-2}-\frac{\sigma _5}{3}+1 & -\frac{\sigma _{-10}}{3}+\frac{\sigma _{-2}}{3}-\frac{\sigma _5}{3}+\frac{1}{3} & -\frac{\sigma _{-10}}{3}+\sigma _{-2}+\frac{2 \sigma _5}{3} & \frac{2 \sigma _{-2}}{3}-\frac{1}{3} & 1 & \frac{\sigma _5}{2}+\frac{1}{2} & -1 & \frac{\sigma _5}{2}+\frac{1}{2} \\
				0 & 0 & 0 & 0 & 1 & \frac{\sigma _{-10}}{3}+\frac{\sigma _{-2}}{3}-\frac{2 \sigma _5}{3}+\frac{4}{3} & \frac{\sigma _{-10}}{3}-\sigma _{-2}+\frac{\sigma _5}{3}-1 & -\frac{\sigma _{-10}}{3}+\frac{\sigma _{-2}}{3}-\frac{\sigma _5}{3}+\frac{1}{3} & \frac{\sigma _{-10}}{3}-\sigma _{-2}-\frac{2 \sigma _5}{3} & \frac{2 \sigma _{-2}}{3}-\frac{1}{3} & 1 & -\frac{\sigma _5}{2}-\frac{1}{2} & 1 & \frac{\sigma _5}{2}+\frac{1}{2} \\
				0 & 0 & 0 & 0 & 1 & \frac{\sigma _{-10}}{3}+\frac{\sigma _{-2}}{3}+\frac{2 \sigma _5}{3}-\frac{4}{3} & -\frac{\sigma _{-10}}{3}+\sigma _{-2}+\frac{\sigma _5}{3}-1 & \frac{\sigma _{-10}}{3}-\frac{\sigma _{-2}}{3}-\frac{\sigma _5}{3}+\frac{1}{3} & \frac{\sigma _{-10}}{3}-\sigma _{-2}+\frac{2 \sigma _5}{3} & -\frac{1}{3} 2 \sigma _{-2}-\frac{1}{3} & 1 & -\frac{\sigma _5}{2}-\frac{1}{2} & -1 & \frac{\sigma _5}{2}+\frac{1}{2} \\
				0 & 0 & 0 & 0 & 1 & \frac{\sigma _{-10}}{3}+\frac{\sigma _{-2}}{3}+\frac{2 \sigma _5}{3}-\frac{4}{3} & \frac{\sigma _{-10}}{3}-\sigma _{-2}-\frac{\sigma _5}{3}+1 & \frac{\sigma _{-10}}{3}-\frac{\sigma _{-2}}{3}-\frac{\sigma _5}{3}+\frac{1}{3} & -\frac{\sigma _{-10}}{3}+\sigma _{-2}-\frac{2 \sigma _5}{3} & -\frac{1}{3} 2 \sigma _{-2}-\frac{1}{3} & 1 & \frac{\sigma _5}{2}+\frac{1}{2} & 1 & \frac{\sigma _5}{2}+\frac{1}{2} \\
				1 & 0 & 0 & -2 s_2 & 0 & 0 & 0 & \frac{3 \sigma _5}{2}+\frac{3}{2} & 0 & 1 & 0 & 1 & -\frac{i}{2}-\frac{\sigma _{-5}}{2} & 0 \\
				1 & 0 & 0 & 2 s_2 & 0 & 0 & 0 & \frac{3 \sigma _5}{2}+\frac{3}{2} & 0 & 1 & 0 & 1 & \frac{i}{2}+\frac{\sigma _{-5}}{2} & 0 \\
				1 & -2 s_2 & 0 & 0 & 1 & 0 & 0 & 0 & 0 & \frac{3}{2}-\frac{3 \sigma _5}{2} & 0 & 0 & 1 & \frac{i}{2}-\frac{\sigma _{-5}}{2} \\
				1 & 2 s_2 & 0 & 0 & 1 & 0 & 0 & 0 & 0 & \frac{3}{2}-\frac{3 \sigma _5}{2} & 0 & 0 & 1 & \frac{\sigma _{-5}}{2}-\frac{i}{2} \\
			\end{array}\]
			\caption{Equations of $16$ conics which form a Kummer configuration on $X$.}\label{fig:kum_bas}
		\end{figure}
\end{landscape}}

\providecommand{\bysame}{\leavevmode\hbox to3em{\hrulefill}\thinspace}
\providecommand{\MR}{\relax\ifhmode\unskip\space\fi MR }
\providecommand{\MRhref}[2]{%
	\href{http://www.ams.org/mathscinet-getitem?mr=#1}{#2}
}
\providecommand{\href}[2]{#2}

\end{document}